\title{\textbf{Fibonacci Numbers and Their Lucas Coefficients}}
\author{Tapan Suthar}
\date{}
\newtheorem{theorem}{Theorem}
\begin{document}
\maketitle

\begin{abstract}
We show that for the classical Fibonacci sequence $(F_n)_{n\ge1}$ and the Lucas sequence $(L_n)_{n\ge0}$ the following identity holds for every integer $n\ge2$:
\[
(n-1)F_n=\sum_{k=1}^{\,n-1} L_k\,F_{\,n-k}.
\]
Equivalently,
\[
\boxed{\,F_n=\frac{1}{\,n-1\,}\sum_{k=1}^{\,n-1} L_k\,F_{\,n-k}\, \;}
\]
We give a detailed proof by mathematical induction and present a small numeric example.
\end{abstract}

\section{Preliminaries}
We use the standard indexing
\[
F_1=1,\quad F_2=1,\quad F_n=F_{n-1}+F_{n-2}\quad (n\ge3),
\]
for the Fibonacci numbers, and the Lucas numbers with the convenient extension
\[
L_0=2,\quad L_1=1,\quad L_2=3,\quad L_n=L_{n-1}+L_{n-2}\quad (n\ge2).
\]
(We include $L_0=2$ so that the Lucas recurrence is valid for all indices used below; note $L_2=L_1+L_0\Rightarrow 3=1+2$.)

\section{The Work}
We begin with the Fibonacci recurrence:
\[
F_n = F_{n-1} + F_{n-2}. \tag{1}
\]

\subsection{Derivation by repeated substitution and summation}
Here we show explicitly how repeated substitution of previous terms and then summing the expanded forms yields the identity in the statement.

\medskip

\noindent\textbf{Step 1 — single expansions.} Start from (1) and perform repeated substitutions for $F_{n-1},F_{n-2},\dots$ to produce several different expanded forms of the same $F_n$. For clarity write the equation after $r-1$ substitutions as $E^{(r)}$ (so $E^{(1)}$ is the original relation).

\[
\begin{aligned}
E^{(1)}:\; & F_n = F_{n-1} + F_{n-2},\\[4pt]
E^{(2)}:\; & \text{(substitute }F_{n-1}=F_{n-2}+F_{n-3}\text{ into }E^{(1)})\\
         & F_n = 2F_{n-2} + F_{n-3},\\[4pt]
E^{(3)}:\; & \text{(substitute }F_{n-2}=F_{n-3}+F_{n-4}\text{ into }E^{(2)})\\
         & F_n = 3F_{n-3} + 2F_{n-4},\\[4pt]
E^{(4)}:\; & \text{(substitute }F_{n-3}=F_{n-4}+F_{n-5}\text{ into }E^{(3)})\\
         & F_n = 5F_{n-4} + 3F_{n-5},\\
&\qquad\vdots
\end{aligned}
\]
Each $E^{(r)}$ is simply the result of expanding $F_n$ by replacing the highest-index previous term repeatedly. After $r-1$ substitutions the right-hand side involves $F_{n-r+1},F_{n-r},\dots$ with integer coefficients.

\medskip

\noindent\textbf{Step 2 — collect several expanded copies.} Now form the sum
\[
\Sigma := E^{(1)} + E^{(2)} + \cdots + E^{(n-1)},
\]
that is, add together the $n-1$ expanded forms of $F_n$ (from no substitutions up to $n-2$ substitutions). On the left-hand side each $E^{(r)}$ contributes a copy of $F_n$, so
\[
\text{LHS}(\Sigma) = \underbrace{F_n + F_n + \cdots + F_n}_{(n-1)\text{ times}} = (n-1)F_n.
\]

On the right-hand side we collect the contributions to each $F_{n-k}$ (for $k=1,\dots,n-1$) coming from all expansions $E^{(r)}$. Denote by $a_k$ the total coefficient of $F_{n-k}$ appearing in the sum $\Sigma$. By construction,
\[
\Sigma \quad\Rightarrow\quad (n-1)F_n \;=\; \sum_{k=1}^{\,n-1} a_k\,F_{\,n-k}. \tag{2}
\]

\noindent\textbf{Step 3 — the recurrence for the collected coefficients.} Inspecting how coefficients accumulate when performing one further substitution shows that the sequence $(a_k)_{k\ge1}$ satisfies the same two-term recurrence as the Lucas numbers. Concretely, each time we perform one more substitution (moving from $E^{(r)}$ to $E^{(r+1)}$) the vector of coefficients shifts and new coefficients are obtained by summing adjacent previous coefficients. Therefore the collection $a_k$ obeys
\[
a_{k+1} = a_k + a_{k-1}\quad (k\ge2),
\]
with initial values \(a_1=1\) (from each expansion there is exactly one \(F_{n-1}\) appearing in total) and \(a_2=3\) (which can be checked from the first two or three expansions). These initial values match the Lucas sequence seed \(L_1=1,L_2=3\). Thus \(a_k=L_k\) for all \(k\ge1\).

\medskip

\noindent\textbf{Illustration (numerical check).} Take \(n=6\). The expansions \(E^{(1)},\dots,E^{(5)}\) produce collected coefficients \(a_1,\dots,a_5 = 1,3,4,7,11\), which are the Lucas numbers \(L_1,\dots,L_5\). Summing those coefficients times the corresponding Fibonacci numbers gives
\[
\sum_{k=1}^{5} L_k F_{6-k}
=1\cdot F_5 + 3\cdot F_4 + 4\cdot F_3 + 7\cdot F_2 + 11\cdot F_1
=5+9+8+7+11 =40,
\]
and the left-hand side is \((6-1)F_6 =5\cdot 8=40\), as required.

\bigskip

This constructive procedure (expand, collect, sum the expanded forms) explains how the identity
\[
(n-1)F_n=\sum_{k=1}^{n-1} L_k F_{n-k}
\]
was discovered: repeated substitution produces multiple expanded expressions of \(F_n\); summing those \(n-1\) expressions yields \((n-1)F_n\) on the left and produces Lucas-number weights on the earlier Fibonacci terms on the right.

\section{Main theorem and detailed proof by induction}
\begin{theorem}
For every integer \(n\ge2\),
\[
\boxed{\, (n-1)F_n \;=\; \sum_{k=1}^{\,n-1} L_k\,F_{\,n-k}\, }.
\]
Equivalently,
\[
\boxed{\, F_n \;=\; \frac{1}{\,n-1\,}\sum_{k=1}^{\,n-1} L_k\,F_{\,n-k}\, }.
\]
\end{theorem}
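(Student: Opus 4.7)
The plan is to prove the identity by strong induction on $n$, matching the strategy announced in the abstract. Write $S_n := \sum_{k=1}^{n-1} L_k F_{n-k}$, so the goal becomes $S_n = (n-1)F_n$. For base cases I would verify this by direct computation at $n=2$ (where $S_2 = L_1 F_1 = 1 = 1\cdot F_2$) and $n=3$ (where $S_3 = L_1 F_2 + L_2 F_1 = 1 + 3 = 4 = 2\cdot F_3$), which supplies enough starting data for the two-step recurrence that will drive the induction.

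For the inductive step, I assume the identity for all indices strictly less than $n$ (with $n \ge 4$). The key move is to derive a recurrence for $S_n$ by applying the Fibonacci recurrence $F_{n-k} = F_{(n-1)-k} + F_{(n-2)-k}$ inside each summand. This substitution is valid only when $n-k \ge 3$, i.e.\ $1 \le k \le n-3$; the two extreme indices $k = n-2$ and $k = n-1$, where $F_{n-k} \in \{F_1,F_2\}$, must be peeled off and handled separately. After splitting the resulting sums and re-identifying them as $S_{n-1}$ and $S_{n-2}$ (up to a boundary correction) I expect to recover the clean relation
\[
S_n \;=\; S_{n-1} + S_{n-2} + L_{n-1},
\]
with the residual $L_{n-1}$ arising from the peeled-off terms at $k=n-2,n-1$ together with the missing tail of $S_{n-1}$.

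From there the proof finishes algebraically: substituting $S_{n-1} = (n-2)F_{n-1}$ and $S_{n-2} = (n-3)F_{n-2}$ from the induction hypothesis and comparing against $(n-1)F_n = (n-1)(F_{n-1}+F_{n-2})$, the desired equality reduces to $L_{n-1} = F_n + F_{n-2}$, which is the classical Fibonacci--Lucas relation $L_m = F_{m+1}+F_{m-1}$; this I would either cite or dispose of with a short inductive lemma of its own. The main obstacle I anticipate is the index bookkeeping in establishing $S_n = S_{n-1}+S_{n-2}+L_{n-1}$: the boundary extraction at $k=n-2,n-1$ and the shift between the two sums produced by the expansion need to be aligned carefully so that the residual term is exactly $L_{n-1}$ and nothing more. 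Once that recurrence is in hand, the remainder is a one-line algebraic check.
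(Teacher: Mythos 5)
Your proposal is correct, and it reaches the identity by a genuinely different decomposition than the paper. Both arguments set $S_n=\sum_{k=1}^{n-1}L_kF_{n-k}$ and run a two-step induction, but the paper attacks the \emph{Lucas} factors: it peels off the $k=1$ term, reindexes, and applies $L_{j+1}=L_j+L_{j-1}$ to obtain $S_{m+1}=F_m+S_m+\bigl(L_0F_{m-1}+S_{m-1}\bigr)$, which closes immediately using only $L_0=2$ and the Fibonacci recurrence. You instead attack the \emph{Fibonacci} factors, expanding $F_{n-k}=F_{n-1-k}+F_{n-2-k}$ for $1\le k\le n-3$ and peeling off $k=n-2,\,n-1$; your bookkeeping does come out as claimed, since the truncated copy of $S_{n-1}$ is short by exactly $L_{n-2}F_1$, which cancels against the peeled term $L_{n-2}F_2$, leaving precisely $S_n=S_{n-1}+S_{n-2}+L_{n-1}$. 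The trade-off is that your route then needs the external identity $L_{n-1}=F_n+F_{n-2}$ (equivalently $L_m=F_{m+1}+F_{m-1}$) to finish, whereas the paper's version is entirely self-contained; what you gain is the clean structural recurrence $S_n=S_{n-1}+S_{n-2}+L_{n-1}$, which makes the mechanism of the identity more transparent and explains \emph{why} a Lucas-type correction term appears. Your base cases $n=2,3$ are the right ones for launching the step at $n\ge4$, and the arithmetic in both checks is correct. Just be sure to actually include the short lemma for $L_m=F_{m+1}+F_{m-1}$ (or a citation), since it carries real weight in your closing computation.
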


\begin{proof}
Define
\[
S_n \;:=\; \sum_{k=1}^{\,n-1} L_k\,F_{\,n-k}.
\]
We will prove \(S_n=(n-1)F_n\) for all \(n\ge2\) by induction on \(n\).

\textbf{Base case:} \(n=2\). Then
\[
S_2 = \sum_{k=1}^{1} L_k F_{2-k} = L_1 F_1 = 1\cdot 1 = 1,
\]
and \((2-1)F_2 = 1\cdot 1 = 1\). So the claim holds for \(n=2\).

\textbf{Inductive hypothesis:} Suppose for some integer \(m\ge2\) we have
\[
S_m = (m-1)F_m \quad\text{and}\quad S_{m-1} = (m-2)F_{m-1}.
\]
We prove the statement for \(n=m+1\).

Consider
\[
S_{m+1} \;=\; \sum_{k=1}^{m} L_k\,F_{\,m+1-k}.
\]
Separate the first term (when \(k=1\)) and reindex the remaining sum:
\[
S_{m+1} = L_1F_m + \sum_{k=2}^{m} L_k F_{m+1-k}
= F_m + \sum_{j=1}^{m-1} L_{j+1} F_{m-j},
\]
where the change of variable \(j=k-1\) was used in the second sum.

Use the Lucas recurrence \(L_{j+1}=L_j+L_{j-1}\) (valid for \(j\ge1\), and recall \(L_0=2\)). Thus
\[
\sum_{j=1}^{m-1} L_{j+1} F_{m-j}
= \sum_{j=1}^{m-1} (L_j+L_{j-1}) F_{m-j}
= \sum_{j=1}^{m-1} L_j F_{m-j} \;+\; \sum_{j=1}^{m-1} L_{j-1} F_{m-j}.
\]
The first sum on the right is exactly \(S_m\). For the second sum, reindex \(i=j-1\), which yields
\[
\sum_{j=1}^{m-1} L_{j-1} F_{m-j}
= \sum_{i=0}^{m-2} L_i F_{m-1-i}
= L_0 F_{m-1} + \sum_{i=1}^{m-2} L_i F_{m-1-i}
= L_0 F_{m-1} + S_{m-1}.
\]
Combining these pieces gives
\[
S_{m+1} = F_m + S_m + \big(L_0 F_{m-1} + S_{m-1}\big).
\]
Now substitute the inductive hypotheses \(S_m=(m-1)F_m\) and \(S_{m-1}=(m-2)F_{m-1}\), and recall \(L_0=2\). We obtain
\begin{align*}
S_{m+1}
&= F_m + (m-1)F_m + \big(2 F_{m-1} + (m-2)F_{m-1}\big)\\
&= mF_m + mF_{m-1}\\
&= m(F_m+F_{m-1})\\
&= m F_{m+1}.
\end{align*}
This proves \(S_{m+1}=mF_{m+1}\), completing the inductive step.

By induction, the identity \(S_n=(n-1)F_n\) holds for all integers \(n\ge2\). Rearranging yields the equivalent form
\[
\boxed{\, F_n \;=\; \frac{1}{\,n-1\,}\sum_{k=1}^{\,n-1} L_k\,F_{\,n-k}\, }.
\]
\end{proof}

\section{Result (bold statement)}
\textbf{For every integer \(n\ge2\), the \(n\)-th Fibonacci number can be written as}
\[
\boxed{\, F_n \;=\; \frac{1}{\,n-1\,}\sum_{k=1}^{\,n-1} \big(L_k\cdot F_{\,n-k}\big) \, }.
\]

\section{Conclusion}
We proved that repeated expansion and collection of terms leads to the identity \((n-1)F_n=\sum_{k=1}^{n-1}L_kF_{n-k}\). Dividing both sides by \(n-1\) gives a representation of \(F_n\) as a weighted sum of earlier Fibonacci numbers where the weights are Lucas numbers and the overall factor is \(1/(n-1)\). This connects the two classical sequences in a simple algebraic identity. Possible extensions include studying analogous identities for generalized Fibonacci sequences (different seeds) or for higher-order recurrences (Tribonacci, etc.).

\section*{References}
\begin{enumerate}
  \item T. Koshy, \emph{Fibonacci and Lucas Numbers with Applications}, Wiley, 2001.
  \item S. Vajda, \emph{Fibonacci and Lucas Numbers, and the Golden Section}, Dover, 2008.
  \item V. E. Hoggatt, \emph{Fibonacci and Lucas Numbers}, Houghton-Mifflin, 1969.
\end{enumerate}

\end{document}